\theoremstyle{definition}
\newtheorem{theorem}{Theorem}[section]
\newtheorem{lemma}[theorem]{Lemma}
\newtheorem{corollary}[theorem]{Corollary}
\theoremstyle{definition}
\newtheorem{definition}[theorem]{Definition}
\newtheorem{examples}[theorem]{Examples}
\newtheorem{remarks}[theorem]{Remarks}
\definecolor{blue-url}{RGB}{0,0,100}
\definecolor{red-url}{RGB}{100,0,0}
\definecolor{green-url}{RGB}{0,100,0}
\definecolor{light-yellow}{RGB}{255,255,128}
\definecolor{light-blue}{RGB}{193,255,255}
\definecolor{light-red}{RGB}{239,83,80}
\renewcommand{\emptyset}{\varnothing}
\renewcommand{\setminus}{\smallsetminus}
\renewcommand{\,}{\kern 0.1em}
\providecommand\llb{\llbracket}
\providecommand\rrb{\rrbracket}
\newcommand{\evid}[1]{\textsf{#1}}
\def\myoverline#1{\ThisStyle{%
  \setbox0=\hbox{$\SavedStyle#1$}%
  \stackengine{1.7pt}{$\SavedStyle#1$}{\rule{\wd0}{0.05pt}}{O}{c}{F}{F}{S}%
}}
\def\myoverlinesub#1{\ThisStyle{%
  \setbox0=\hbox{$\SavedStyle#1$}%
  \stackengine{2.1pt}{$\SavedStyle#1$}{\rule{\wd0}{0.1pt}}{O}{c}{F}{F}{S}%
}}
\DeclareFontFamily{OMX}{MnSymbolE}{}
\DeclareSymbolFont{MnLargeSymbols}{OMX}{MnSymbolE}{m}{n}
\DeclareFontShape{OMX}{MnSymbolE}{m}{n}{
	<-6>  MnSymbolE5
	<6-7>  MnSymbolE6
	<7-8>  MnSymbolE7
	<8-9>  MnSymbolE8
	<9-10> MnSymbolE9
	<10-12> MnSymbolE10
	<12->   MnSymbolE12
}{}
\DeclareFontShape{OMX}{MnSymbolE}{b}{n}{
	<-6>  MnSymbolE-Bold5
	<6-7>  MnSymbolE-Bold6
	<7-8>  MnSymbolE-Bold7
	<8-9>  MnSymbolE-Bold8
	<9-10> MnSymbolE-Bold9
	<10-12> MnSymbolE-Bold10
	<12->   MnSymbolE-Bold12
}{}
\let\llangle\@undefined
\let\rrangle\@undefined
\DeclareMathDelimiter{\llangle}{\mathopen}%
{MnLargeSymbols}{'164}{MnLargeSymbols}{'164}
\DeclareMathDelimiter{\rrangle}{\mathclose}%
{MnLargeSymbols}{'171}{MnLargeSymbols}{'171}
\begin{document}
\title{A characterization of atomicity}
\author{Salvatore Tringali}
\address{School of Mathematical Sciences,
Hebei Normal University | Shijiazhuang, Hebei province, 050024 China}
\email{salvo.tringali@gmail.com}
\urladdr{http://imsc.uni-graz.at/tringali}
%
%\thanks{This manuscript was created on: \today{} at \currenttime}
\subjclass[2020]{Primary 13A05, 13E99, 16P70, 20M10, 20M13.}
%
% 06F05: Ordered semigroups and monoids
% 13A05: Divisibility and factorizations in commutative rings
% 13F15: Commutative rings defined by factorization properties
% 13P05: Polynomials, factorization in commutative rings
% 15A23: Factorization of matrices
% 16D70: Structure and classification for modules, bimodules and ideals, direct sum decomposition and cancellation in associative algebras)
% 16E99: Generic for Associative rings and algebras
% 17C27: Idempotents, Peirce decompositions (in nonassociative rings and algebras)
% 18B35: Preorders, orders, domains and lattices
%
% 20-XX: GROUP THEORY AND GENERALIZATIONS
% 20Mxx: **Semigroups**
% 20M10: General structure theory for semigroups
% 20M13: Arithmetic theory of monoids
% 20M14: Commutative semigroups
% 20M25: Semigroup rings, multiplicative semigroups of rings
% 47A68: Factorization theory of linear operators
%
\keywords{Chain conditions, atoms, characterization, factorizations, monoids, preorders.}
%
%\thanks{The author was supported by ...}
%
\begin{abstract}
In [Math.~Proc.~Cam\-bridge Philos.~Soc.~\textbf{64} (1968), 251--264], P.\,M.~Cohn famously claimed that a commutative domain is atomic if and only if it satisfies the ascending chain condition on principal ideals (ACCP). Some years later, a counterexample was provided by A.~Grams in [Math.~Proc.~Cambridge Philos.~Soc.~\textbf{75} (1974), 321--329]: Every commutative domain with the ACCP is atomic, but not vice versa. This has led to the problem of finding  a sensible (ideal-theoretic) characterization of atomicity.

The question (explicitly stated on p.~3 of A.~Geroldinger and F.~Halter-Koch's 2006 mon\-o\-graph on fac\-tor\-i\-za\-tion) is still open. We settle it using the language of monoids and preorders.
%by turning the ACCP from a global to a local condition and thus fixing Cohn's claim.
%show that, in a monoid $H$ endowed with a locally artinian preorder $\preceq$, each $\preceq$-non-unit factors as a product of $\preceq$-irreducibles; and conversely, if  holds if every $\preceq$-irreducible has finite $\preceq$-height. Whatever that means, a char\-ac\-ter\-i\-za\-tion of atomicity is then obtained by specializing the result to the case where $H$ is an acyclic monoid (namely, $uxv \ne x$ for all $u, v, x \in \allowbreak H$ unless $u$ and $v$ are both units) and $\preceq$ is the divisibility preorder on $H$.
%(ii) recognizing that all $\preceq$-irreducibles are now atoms and their $\preceq$-height is $1$, (iii) reinterpreting the condition of local artinianity in ideal-theoretic terms, and (iv) noting that, among many others, cancellative commutative monoids are acyclic.
\end{abstract}
\maketitle
\thispagestyle{empty}

\section{Introduction}
\label{sec:intro}
A (multiplicatively written) monoid $H$ is \evid{cancellative} if the function $H \to H \colon x \mapsto uxv$ is injective for all $u, v \in H$; \evid{unit-cancellative} if $xy \ne x \ne yx$ for all $x, y \in H$ with $y$ not a unit; and \evid{acyclic} if $uxv \ne \allowbreak x$ for all $u, v, x \in H$ unless $u$ and $v$ are both units (we address the reader to J.\,M.~Howie's monograph \cite{Ho95} for generalities on monoids). 
%and let a \evid{premon} (or \evid{premonoid}) be a pair consisting of a monoid and a preorder on its carrier set (note that, in principle, we require no compatibility between the monoid operation and the preorder). 

An acyclic or cancellative monoid is unit-cancellative, but not conversely; and it is a basic fact that a cancellative monoid sat\-is\-fy\-ing the ascending chain condition (ACC) on both principal left ideals (ACCPL) and principal right ideals (ACCPR) is \evid{atomic}, namely, each non-unit is a product of atoms (we recall that an \evid{atom}, in an arbitrary monoid, is a non-unit that does not factor as a product of two non-units). We refer to this fundamental result as \emph{Cohn's theorem}, since it can be traced back to P.\,M.~Cohn's work on factorization in the 1960s (e.g., see \cite[Theorem 2.8]{Co64}, the unnumbered corollary on p.~589 of \cite{Co69}, 
and \cite[Proposition 0.9.3]{Co06}).

Cohn's theorem was extended to unit-cancellative monoids in \cite[Theorem 2.28(i)]{Fa-Tr18} and then generalized to premons in \cite[Theorem 3.10]{Tr20(c)} and \cite[Theorem 3.4]{Co-Tr-21(a)}, where a \evid{premon} (or \evid{premonoid}) is a pair consisting of a monoid $H$ and a preorder --- i.e., a reflexive and transitive binary relation --- on (the carrier set of) $H$. A key to these arguments is the role played by the \evid{divisibility preorder} $\mid_H$, viz., the binary relation on $H$ defined by $x \mid_H y$ if and only if $x \in H$ and $y \in HxH$. In fact, the result follows from applying \cite[Theorem 3.10]{Tr20(c)} to the \evid{divisibility premon} $(H, \mid_H)$ of $H$ and considering that, by \cite[Corollary 4.6]{Tr20(c)}, $H$ is unit-cancellative and satisfies the ACCPL and the ACCPR if and only if it is acyclic and satisfies the ACC on principal two-sided ideals (ACCP).

The interplay between ACCs and factorization in \emph{commutative} monoids is a classical topic which has overseen a revival in recent years.
In \cite[Proposition 1.1]{Co68}, Cohn famously claimed (without proof) that a commutative domain $R$ is atomic (i.e., the multiplicative monoid $R^\bullet$ of the non-zero elements of $R$ is atomic) if and only if $R$ satisfies the ACCP (i.e., $R^\bullet$ satisfies the ACCP). Some years later, A.~Grams \cite{Grams74} showed, by way of a counterexample, that Cohn's assertion is wrong. Indeed, every commutative domain with the ACCP is atomic, but not vice versa. Grams' construction is usually acknowledged as the \emph{first} counter\-example, but it seems that Cohn had already realized his own mistake and outlined a simpler construction in \cite[p.~4, lines~14--18]{Co73}.

Further contributions in the same vein were subsequently made by A.~Zaks \cite{Zaks82}, who considered cer\-tain quotients of a polynomial ring in infinitely many variables and proved that they are atomic but do not satisfy the ACCP; and by M.~Roitman, who showed the existence of an atomic commutative domain $R$ such that the univariate polynomial ring $R[X]$ is not atomic \cite[Example 5.1]{Roit93}. Incidentally, Roitman's example produced an atomic commutative domain without the ACCP (if $R$ had the ACCP, then we would gather from \cite[Theorem 14.6]{Gil84} that $R[X]$ also has the ACCP and hence is atomic by Cohn's theorem). More recently, J.\,G.~Boynton and J.~Coykendall \cite{Boy-Coy2019} have used pullbacks of commutative rings to generate large families of atomic commutative domains that do not satisfy the ACCP; F.~Gotti and B.~Li \cite[Theorem 4.4]{GotLi22(a)} have built what appears to be the first example of an atomic, commutative monoid domain without the ACCP; and J.~Bell et al.~\cite[Proposition 7.6]{Be-Br-Na-Sm22} have provided the first example of an atomic, non-commutative, finitely presented monoid domain that satisfies neither the ACCPL nor the ACCPR (see also
\cite{Coy-Go2019} for some related results on monoid rings, atomicity, and the ACCP).

It is definitely easier to come up with cancellative commutative monoids that are lacking the ACCP. E.g., S.\,T.~Chapman et al.~proved in \cite[Corollary 4.4]{Cha-Got-Got21} that, if $r$ is a non-zero rational number smaller than $1$ and the numerator (of the reduced fraction) of $r$ is not $1$, then the submonoid of the ad\-di\-tive group of the rational field generated by $1, r, r^2, \ldots$ is atomic but does not satisfy the ACCP.

With these preliminaries in place, it is natural to ask if Cohn's false claim (that, for commutative domains, atomicity is equivalent to the ACCP) can be fixed by providing a sensible characterization (of an ideal-theoretic nature) of when a cancellative commutative monoid is atomic. In this regard, the last lines of p.~3 in A.~Geroldinger and F.~Halter-Koch's 2006 monograph \cite{GeHK06} on non-u\-nique factorization read, ``Up to now, there is no satisfactory ideal-theoretic characterization of atomic [commutative] domains.''
Geroldinger has confirmed in private communication that, to his knowledge, the problem --- ostensibly belonging to folklore --- is still open.

In the present paper, we aim to settle the question by proving, more generally, a characterization of \emph{factorability} in the abstract setting of premons (Corollary \ref{cor:characterization-of-factoriable}). First, we demonstrate that, in a locally artinian premon $(H, \preceq)$, every $\preceq$-non-unit factors as a product of $\preceq$-irreducibles (Theorem \ref{thm:FTF-with-local-artinianity}). Next, we obtain a char\-ac\-ter\-i\-za\-tion of atomicity (Corollary \ref{cor:characterize-atomicity-in-acyclic-monoids}) by (i) restricting the previous result to the case where $H$ is acyclic and $\preceq$ is the divisibility preorder on $H$, (ii) recognizing that all $\preceq$-irreducibles are then atoms, (iii) reinterpreting the condition of local artinianity in ideal-theoretic terms, and (iv) considering that, among many others, cancellative commutative monoids are acyclic. Details will be given in Sect.~\ref{sec:premonoids} (see, in particular, Definition \ref{def:artinianity-and-the-like}), but something to keep in mind is that we use the adverb ``locally'' to refer to an element-wise property (i.e., the term has nothing to do with prime ideals and localizations in the sense of \cite[Sect.~2.2]{GeHK06}). 

Overall, this work is simple if measured from the technicality of the proofs. Its value, we hope, lies rather in the insight that the ACCP has little to do with the classical setting \cite{Ger-Zho2020} of factorization theory (an observation already made in \cite{Tr20(c)}) and is the first step in a count\-a\-bly infinite ladder of weaker and weaker conditions ultimately ``converging'' to local artinianity (Remarks \ref{rem:artinianity}).

\section{Turning the ACCP to an element-wise condition}
\label{sec:premonoids}

Let $(H, \preceq)$ be a premon (note that, in principle, we require no compatibility between the monoid operation and the preorder).
An el\-e\-ment $u \in H$ is a \evid{$\preceq$-unit} if $u \preceq 1_H \preceq u$ and a \evid{$\preceq$-non-unit} oth\-er\-wise. 
%A \evid{$\preceq$-irreducible} is then a $\preceq$-non-unit $a \in H$ such that $a \ne xy$ for all $\preceq$-non-units $x$ and $y$ with $x \prec a$ and $y \prec a$.
A \evid{$\preceq$-quark} is then a $\preceq$-non-unit $a \in \allowbreak H$ with the property that there is no $\preceq$-non-unit $b \prec a$ (i.e., $b \preceq a$ and $a \not\preceq b$); and given $s \in \mathbb N_{\ge 2} \cup \{\infty\}$, a \evid{$\preceq$-ir\-re\-duc\-i\-ble of degree $s$} (or \evid{degree-$s$ $\preceq$-ir\-red\-u\-ci\-ble}) is a $\preceq$-non-unit $a$ such that $a \ne x_1 \cdots x_k$ for every $k \in \llb 2, s \rrb$ and all $\preceq$-non-units $x_1 \prec a, \ldots, x_k \prec a$. In par\-tic\-u\-lar, we refer to a $\preceq$-ir\-re\-duc\-i\-ble of degree $2$ as a \evid{$\preceq$-ir\-re\-duc\-i\-ble} (occasionally, the term may also be used as an adjective).

The \evid{$\preceq$-height} of an element $x \in H$ is, on the other hand, the supremum of the set of all $n \in \mathbb N^+$ for which there are $\preceq$-non-units $x_1, \ldots, x_n$ with $x_1 = x$ and $x_{i+1} \prec x_i$ for each $i \in \llb 1, n-1 \rrb$, where $\sup \emptyset := \allowbreak 0$. Of course, $x$ is a $\preceq$-unit if and only if its $\preceq$-height is zero; and is a $\preceq$-quark if and only if its $\preceq$-height is one (in general, one cannot say much about the $\preceq$-height of a $\preceq$-irreducible).

The no\-tions of $\preceq$-[non-]unit, $\preceq$-quark, $\preceq$-ir\-red\-u\-ci\-ble, and $\preceq$-height were introduced in \cite[Definitions 3.6 and 3.11]{Tr20(c)},
whereas $\preceq$-ir\-re\-duc\-i\-bles of \emph{finite} degree were first considered in \cite[Definition 3.1]{Co-Tr-21(a)}. Note that a $\preceq$-quark is $\preceq$-ir\-red\-u\-ci\-ble, but the converse need not be true \cite[Remark 3.7(4)]{Tr20(c)}.

\begin{definition}
\label{def:artinianity-and-the-like}
\begin{enumerate*}[label=\textup{(\arabic{*})}, mode=unboxed]
\item\label{def:artinianity-and-the-like(1)}
Given a premon $(H, \preceq)$, an element $x \in H$ is \evid{$\preceq$-artinian} if there exists no (strictly) $\preceq$-decreasing sequence $x_1, x_2, \ldots$ in $H$ with $x_1 = x$, and \evid{strongly $\preceq$-artinian} if the $\preceq$-height of $x$ is finite. 
\end{enumerate*}

\vskip 0.05cm

\begin{enumerate*}[label=\textup{(\arabic{*})}, mode=unboxed, resume]
\item\label{def:artinianity-and-the-like(2)}
The premon itself is then \evid{artinian} (resp., \evid{strongly artinian}) if every $\preceq$-non-unit is $\preceq$-artinian (resp., strongly $\preceq$-artinian); and \evid{$k$-locally} (resp., \evid{strongly $k$-locally}) \evid{artinian}, for a certain $k \in \allowbreak \mathbb N^+ \cup \allowbreak \{\infty\}$, if every $\preceq$-non-unit is a product of $k$ or fewer $\preceq$-artinian (resp., strongly $\preceq$-artinian) $\preceq$-non-units.
\end{enumerate*}

\vskip 0.05cm

\begin{enumerate*}[label=\textup{(\arabic{*})}, mode=unboxed, resume]
\item\label{def:artinianity-and-the-like(3)}
An $\infty$-locally (resp., strongly $\infty$-locally) artinian premon will simply be called a \evid{locally} (resp., \evid{strongly locally}) artinian premon; and we shall say that the monoid $H$ is $\preceq$-artinian, [strongly] locally $\preceq$-artinian, etc., if the premon $(H, \preceq)$ is, resp., artinian, [strongly] locally artinian, etc.
\end{enumerate*}
\end{definition}

The notions of $\preceq$-artinianity and strong $\preceq$-artinianity (as per Definition \ref{def:artinianity-and-the-like}\ref{def:artinianity-and-the-like(2)}) are equivalent to the homonymous notions introduced in \cite[Definitions 3.8 and 3.11]{Tr20(c)} and further studied in \cite{Co-Tr-21(a), Co-Tr-22(a)}. The main novelty of the present work lies in the idea of turning $\preceq$-artinianity into an \emph{element-wise} condition, inspired by an online talk by F.~Gotti at the seminar of the Algebra and Number Theory research group of University of Graz in June 2022. %based on joint work with Li \cite{GotLi22(b)} 
%where  discussed a weakening of the ACCP in the context of cancellative commutative monoids.

\begin{remarks}
\label{rem:artinianity}
\begin{enumerate*}[label=\textup{(\arabic{*})}, mode=unboxed]
\item\label{rem:artinianity(1)}
A $1$-locally (resp., strongly $1$-locally) artinian premon is nothing else than an artinian (resp., strongly artinian) premon, and it is fairly obvious that, for all $h, k \in \mathbb N^+ \cup \{\infty\}$ with $h \le k$, an $h$-locally (resp., strongly $h$-locally) artinian premon is also $k$-locally (resp., strongly $k$-locally) artinian. In particular, artinian premons are locally artinian. The converse need not be true, as shown by Grams' counterexample in the basic case of the divisibility premon of a cancellative commutative monoid. 
\end{enumerate*}

\vskip 0.05cm

\begin{enumerate*}[label=\textup{(\arabic{*})}, mode=unboxed, resume]
\item\label{rem:artinianity(2)}
Fix $h, k \in \mathbb N_{\ge 2}$, and let $X = \{x_0, x_1, \ldots\}$ and $Y = \{y_1, y_2, \ldots\}$ be disjoint, countably in\-fi\-nite sets and $\sigma$ be a (strictly) increasing function on $\mathbb N$. Following \cite[Sect.~2.3]{Tr20(c)}, we denote by $H$ the quotient of the free monoid $\mathscr F(S)$ on $S := X \cup Y$ by the smallest monoid congruence $R^\#$ containing the set
\[
R := \bigcup_{r \in \mathbb N} \{(x_{rh}, x_{rh+1} \ast \cdots \ast x_{rh+h}), (x_{rh}, \underbrace{y_{\sigma(rk)+1} \ast y_{\sigma(rk)+2} \ast \cdots \ast y_{\sigma(rk+k)}}_{\sigma(rk+k) - \sigma(rk)  \text{ \scriptsize terms}})\} \subseteq \mathscr F(S) \times \mathscr F(S),
\]
where $\ast$ is the operation (of word concatenation) in $\mathscr F(S)$. 
Writing $\myoverline{\mathfrak u}$ for the congruence class modulo $R^\#$ of an $S$-word $\mathfrak u$, it is clear that $\myoverline{z}$ is a $\mid_H$-quark for every $z \in S \setminus \{x_0, x_h, x_{2h}, \ldots\}$, while the $\mid_H$-height of $\myoverlinesub{x_{rh}}$ is infinite for each $r \in \mathbb N$ (here we use that $h, k \ge 2$ and hence $\sigma(rk+k) - \sigma(rk) \ge 2$ by the hypothesis that $\sigma$ is increasing). It is then a routine exercise to check that (i) if $h < k$ and $\sigma$ is the identity map on $\mathbb N$, then the divisibility premon $(H, \mid_H)$ of $H$ is strongly $k$-locally artinian but not $h$-locally artinian, and (ii) if the growth rate of $\sigma$ is superlinear (e.g., $\sigma(n) := n^2$ for all $n \in \mathbb N$), then $(H, \mid_H)$ is strongly locally artinian but not $k'$-artinian for any $k' \in \mathbb N^+$ (we leave the details to the reader).
\end{enumerate*}

\vskip 0.05cm

\begin{enumerate*}[label=\textup{(\arabic{*})}, mode=unboxed, resume]
\item\label{rem:artinianity(3)}
Given a premon $\mathcal H = (H, \preceq)$ and an element $\bar{x} \in H$, we put $\downarrow_\mathcal{H} \bar{x} := \{x \in H \colon \allowbreak x \preceq \allowbreak \bar{x}\}$
and $\uparrow_\mathcal{H} \bar{x} := \allowbreak \{x \in \allowbreak H \colon \bar{x} \preceq x\}$. Similarly as in the case of a poset (see, e.g., \cite[p.~45]{Dav-Pri2002}), we call  $\downarrow_\mathcal{H} \bar{x}$ and $\uparrow_\mathcal{H} \bar{x}$, resp., the \evid{principal $\preceq$-ideal} and the \evid{principal $\preceq$-filter} generated by $\bar{x}$. Note that $\uparrow_\mathcal{H} \bar{x}$ is then a principal $\preceq^{\rm op}$-ideal and $\downarrow_\mathcal{H} \bar{x}$ is a principal $\preceq^{\rm op}$-filter, where $\preceq^{\rm op}$ is the dual (see, e.g., \cite[Example 3.3(1)]{Tr20(c)}) of the preorder $\preceq$.\\

\indent{}Now, it is evident that, for all $y, z \in H$, we have $y \preceq z$ if and only if $\downarrow_\mathcal{H} y \subseteq \allowbreak {\downarrow_\mathcal{H} z}$, if and only if $\uparrow_\mathcal{H} z \subseteq \allowbreak {\uparrow_\mathcal{H} y}$. It follows that an element $\bar{x} \in H$ is $\preceq$-artinian if and only if there is no sequence $x_1, x_2, \ldots$ in $H$ with $x_1 = \bar{x}$ and $\downarrow_\mathcal{H} x_{i+1} \subsetneq \allowbreak {\downarrow_\mathcal{H} x_i}$ (resp., $\uparrow_\mathcal{H} x_i \subsetneq {\uparrow_\mathcal{H} x_{i+1}}$) for all $i \in \mathbb N^+$. This allows for an ideal-theoretic interpretation of the notions of $\preceq$-artinianity and local $\preceq$-artinianity introduced in Definition \ref{def:artinianity-and-the-like}. Most notably, the principal $\mid_H$-filter generated by $\bar{x}$ is the principal two-sided ideal $H\bar{x}H$ of the monoid $H$; whence $H$ satisfies the ACCP if and only if it is $\mid_H$-artinian (cf.~\cite[Remark 3.9.4]{Tr20(c)}).
\end{enumerate*}
\end{remarks}

We are going to show that local artinianity is a sufficient condition for a premon $(H, \preceq)$ to be \evid{factorable} in the sense of \cite[Definition 3.2(4)]{Co-Tr-22(a)}, i.e., for each $\preceq$-non-unit to factor as a product of $\preceq$-ir\-red\-u\-ci\-bles (equivalently, we will say that the monoid $H$ is \evid{$\preceq$-factorable}). 

\begin{lemma}
\label{lem:locally-artinian-implies-locally-factorable}
Let $(H, \preceq)$ be a premon and $s$ be either an integer $\ge 2$ or $\infty$. Each locally $\preceq$-artinian $\preceq$-non-unit is then a product of $\preceq$-irreducibles of degree $s$.
\end{lemma}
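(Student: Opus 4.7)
The plan is to reduce the statement to showing that every $\preceq$-artinian $\preceq$-non-unit factors as a product of $\preceq$-irreducibles of degree $s$, and then to dispatch this by a descent argument powered by $\preceq$-artinianity.

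\emph{Reduction to the artinian case.} By the (element-wise interpretation of the) definition, a locally $\preceq$-artinian $\preceq$-non-unit $x$ can be written as a product $x = a_1 \cdots a_n$ of $\preceq$-artinian $\preceq$-non-units $a_i$. If each factor $a_i$ already admits a decomposition into $\preceq$-irreducibles of degree $s$, concatenating these decompositions furnishes one for $x$. So we may (and do) assume that $x$ is itself $\preceq$-artinian, and it suffices to prove: every $\preceq$-artinian $\preceq$-non-unit $a$ is a product of $\preceq$-irreducibles of degree $s$.

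\emph{Core step.} Suppose, for contradiction, that $a$ is $\preceq$-artinian but does not split as a product of $\preceq$-irreducibles of degree $s$. In particular, $a$ itself is not such an irreducible, so there exist an integer $k \in \llb 2, s\rrb$ and $\preceq$-non-units $x_1 \prec a, \ldots, x_k \prec a$ with $a = x_1 \cdots x_k$. Were each $x_i$ a product of $\preceq$-irreducibles of degree $s$, then so would $a$; hence some $x_i$, call it $a'$, fails to be such a product. I claim $a'$ is again $\preceq$-artinian: any strictly $\preceq$-decreasing sequence $a' = y_1 \succ y_2 \succ \cdots$ could be prepended with $a$ (using $a' \prec a$) to obtain a strictly $\preceq$-decreasing sequence starting at $a$, contradicting the $\preceq$-artinianity of $a$.

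\emph{Descent and conclusion.} Iterating the core step via dependent choice, we construct a strictly $\preceq$-decreasing sequence $a = a_1 \succ a_2 \succ \cdots$ of $\preceq$-artinian $\preceq$-non-units, each failing to decompose as a product of $\preceq$-irreducibles of degree $s$. This contradicts the $\preceq$-artinianity of $a$, completing the proof. The only point requiring a moment's thought is the inheritance of $\preceq$-artinianity by the smaller factor $a'$; everything else is a bookkeeping of the definitions.
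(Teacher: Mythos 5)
Your proof is correct and takes essentially the same route as the paper's: both reduce to showing that every $\preceq$-artinian $\preceq$-non-unit factors, and both run the same descent on a putative counterexample --- the paper packages it as picking a $\preceq$-minimal element of the set of $\preceq$-artinian non-units that fail to factor (invoking well-foundedness of artinian preorders), where you build an infinite $\prec$-decreasing chain by dependent choice, which is the same principle. If anything, your version is slightly more careful in writing the bad element as $a = x_1 \cdots x_k$ with $k$ possibly larger than $2$, whereas the paper's proof passes directly to a two-factor decomposition $\bar{x} = yz$.
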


\begin{proof}
Let $\Omega$ be the set of all $\preceq$-artinian $\preceq$-non-units that do not factor as a product of $\preceq$-irreducibles of degree $s$, and suppose for a contradiction that $\Omega$ is non-empty. It then follows from the well-foundedness of artinian preorders (see, e.g., Remark 3.9(3) in \cite{Tr20(c)}) that $\Omega$ has a $\preceq$-minimal element $\bar{x}$. In particular, $\bar{x}$ is neither a $\preceq$-unit nor a $\preceq$-irreducible (because the elements of $\Omega$ are neither $\preceq$-units nor products of $\preceq$-irreducibles of degree $s$). Therefore, $\bar{x} = yz$ for some $\preceq$-non-units $y, z \in H$ with $y \prec \bar{x}$ and $z \prec \bar{x}$, and at least one of $y$ and $z$ is not a product of $\preceq$-irreducibles of degree $s$ (or else so would be $\bar{x}$, which is absurd). But then either $y$ or $z$ is in $\Omega$ (note that $y$ and $z$ are $\preceq$-artinian elements of $H$, since $\bar{x}$ is $\preceq$-artinian and we have $y \prec \bar{x}$ and $z \prec \bar{x}$), contradicting that $\bar{x}$ is a $\preceq$-minimal element of the same set.
\end{proof}

The proof of the next result is now straightforward from Definition \ref{def:artinianity-and-the-like} and Lemma \ref{lem:locally-artinian-implies-locally-factorable}.

\begin{theorem}
\label{thm:FTF-with-local-artinianity}
If $(H, \preceq)$ is a locally artinian premon, then every $\preceq$-non-unit fac\-tors as a product of $\preceq$-ir\-red\-u\-ci\-bles of degree $s$ for all $s \in \mathbb N_{\ge 2} \cup \{\infty\}$ and, in particular, $H$ is a $\preceq$-factorable monoid.
\end{theorem}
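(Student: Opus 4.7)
The plan is to combine Lemma \ref{lem:locally-artinian-implies-locally-factorable} with the definition of local artinianity in an essentially mechanical fashion; all the genuine work has already been done in the proof of the lemma, whose well-foundedness argument subsumes the induction the theorem would otherwise require.

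Concretely, fix $s \in \mathbb N_{\ge 2} \cup \{\infty\}$ and an arbitrary $\preceq$-non-unit $x \in H$. Since $(H, \preceq)$ is locally artinian, i.e., $\infty$-locally artinian in the sense of Definition \ref{def:artinianity-and-the-like}\ref{def:artinianity-and-the-like(2)}--\ref{def:artinianity-and-the-like(3)}, there exists a finite factorization $x = y_1 \cdots y_n$ in which every factor $y_i$ is a $\preceq$-artinian $\preceq$-non-unit. Applying Lemma \ref{lem:locally-artinian-implies-locally-factorable} to each $y_i$ individually, we obtain a factorization of $y_i$ as a product of degree-$s$ $\preceq$-irreducibles; concatenating these $n$ factorizations in the monoid $H$ yields the desired expression for $x$ as a product of degree-$s$ $\preceq$-irreducibles.

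Since $s$ was arbitrary, this proves the first assertion. The second assertion is then just the special case $s = 2$: every $\preceq$-non-unit factors as a product of (ordinary) $\preceq$-irreducibles, which is precisely what it means for $H$ to be $\preceq$-factorable. There is no real obstacle; the only point worth flagging is that the finiteness of $n$ in the factorization $x = y_1 \cdots y_n$ is built into the definition of $\infty$-locally artinian (products in a monoid are by default finite), so no convergence or transfinite issue arises when concatenating the $n$ irreducible factorizations supplied by the lemma.
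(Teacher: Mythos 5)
Your proposal is correct and matches the paper's intent exactly: the paper dismisses the proof as ``straightforward from Definition \ref{def:artinianity-and-the-like} and Lemma \ref{lem:locally-artinian-implies-locally-factorable}'', meaning precisely the decomposition into finitely many $\preceq$-artinian $\preceq$-non-units followed by an application of the lemma to each factor and concatenation. No further comment is needed.
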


In fact, Theorem \ref{thm:FTF-with-local-artinianity} is a refinement of \cite[Theorem 3.10]{Tr20(c)} and the existence part of \cite[Theorem 3.4]{Co-Tr-21(a)}, where the local artinianity of the premon $(H, \preceq)$ is replaced by the stronger condition of artinianity (and, incidentally, only $\preceq$-ir\-red\-u\-ci\-bles of \emph{finite} degree are being considered).

\begin{corollary}
\label{cor:characterization-of-factoriable}
Let $(H, \preceq)$ be a premon such that every $\preceq$-irreducible is a $\preceq$-quark or, more generally, has finite $\preceq$-height. Then $H$ is a $\preceq$-factorable monoid if and only if it is locally $\preceq$-artinian.
\end{corollary}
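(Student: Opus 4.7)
The plan is to verify the two implications separately. The ``$\Leftarrow$'' direction is free from Theorem \ref{thm:FTF-with-local-artinianity}, which already gives $\preceq$-factorability from local $\preceq$-artinianity with no condition on $\preceq$-irreducibles at all; so this half of the corollary is essentially a restatement of that theorem.

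For the ``$\Rightarrow$'' direction, I would fix a $\preceq$-non-unit $x \in H$ and use $\preceq$-factorability to write $x = a_1 \cdots a_n$ with $a_1, \ldots, a_n$ a finite sequence of $\preceq$-irreducibles (which are, by definition, $\preceq$-non-units). The standing hypothesis supplies the crucial leverage: each $a_i$ has finite $\preceq$-height, i.e., each $a_i$ is strongly $\preceq$-artinian. The one observation to record is that any strongly $\preceq$-artinian element is automatically $\preceq$-artinian, because a strictly $\preceq$-decreasing sequence emanating from $a_i$ cannot be longer than the $\preceq$-height of $a_i$, let alone infinite. Thus $x$ has been exhibited as a product of finitely many $\preceq$-artinian $\preceq$-non-units, which is precisely the defining condition for $(H, \preceq)$ to be $\infty$-locally --- that is, locally --- $\preceq$-artinian in the sense of Definition \ref{def:artinianity-and-the-like}\ref{def:artinianity-and-the-like(2)}.

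I do not anticipate a genuine obstacle. The care required is mostly bookkeeping: a $\preceq$-quark has $\preceq$-height $1$ and is in particular a $\preceq$-irreducible of finite $\preceq$-height, which is what legitimizes the ``more generally'' phrasing in the statement; the implication \emph{strongly $\preceq$-artinian $\Rightarrow$ $\preceq$-artinian} is what transports the hypothesis on $\preceq$-irreducibles into the language of Definition \ref{def:artinianity-and-the-like}; and no feature of the monoid operation is used beyond forming the finite product $a_1 \cdots a_n$.
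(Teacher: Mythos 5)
Your proof is correct and follows essentially the same route as the paper's: the ``if'' direction is exactly Theorem~\ref{thm:FTF-with-local-artinianity}, and the ``only if'' direction converts each $\preceq$-irreducible factor of a $\preceq$-non-unit into a strongly $\preceq$-artinian, hence $\preceq$-artinian, $\preceq$-non-unit. If anything, your intermediate conclusion --- that the premon is strongly \emph{locally} artinian --- is phrased more precisely than the paper's, which asserts that the premon is strongly artinian outright.
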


\begin{proof}
The ``if'' part is a consequence of Theorem \ref{thm:FTF-with-local-artinianity}. The ``only if'' part follows from considering that, if $H$ is a $\preceq$-factorable monoid and every $\preceq$-irreducible has finite $\preceq$-height, then the premon $(H, \preceq)$ is strongly artinian and hence locally artinian (as already noted in Remark \ref{rem:artinianity}\ref{rem:artinianity(1)}).
\end{proof}

In the light of Remark \ref{rem:artinianity}\ref{rem:artinianity(3)}, let us say that \evid{an element} $x$ in a monoid $H$ \evid{satisfies the \textup{ACCP}} if there is no sequence $x_1, x_2, \ldots$ in $H$ with $x_1 = x$ and $H x_i H \subsetneq Hx_{i+1} H$ for each $i \in \mathbb N^+$. Cohn's assertion that ``a commutative domain $R$ is atomic if and only if its multiplicative monoid $(R, \cdot\,)$ satisfies the ACCP'' then amounts to the statement that $(R, \cdot\,)$ is atomic if and only if \emph{every} element of $R$ satisfies the ACCP; and we are about to see that the truth is, in fact, not too far from Cohn's (false) claim.

\begin{corollary}
\label{cor:characterize-atomicity-in-acyclic-monoids}
An acyclic monoid is atomic if and only if it has a generating set each of whose elements satisfies the ACCP.
\end{corollary}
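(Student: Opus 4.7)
The plan is to deduce the corollary from Theorem \ref{thm:FTF-with-local-artinianity} applied to the divisibility premon $(H, \mid_H)$, after establishing a short dictionary between the atomic and ideal-theoretic concepts in $H$ and the $\preceq$-theoretic concepts of Section \ref{sec:premonoids}. Specifically, I would first prove two structural facts about any acyclic monoid $H$: \textbf{(a)} the $\mid_H$-units are precisely the units of $H$; and \textbf{(b)} the atoms of $H$ are precisely the $\mid_H$-irreducibles. For (a), starting from $aub = 1_H$, left-multiplication by $u$ yields $u = (ua) \cdot u \cdot b$, so acyclicity forces $ua$ and $b$ to be units; a short computation, combined with the observation that every idempotent in an acyclic monoid is $1_H$ (since $e^2 = e$ combined with acyclicity forces $e \in H^\times$, and then $e = 1_H$), shows that $u$ itself is a unit. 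For (b), the $\preceq$-quarks of any premon are $\preceq$-irreducibles (as recalled in the preliminaries), and a $\mid_H$-quark of $H$ is readily seen to be an atom; conversely, if $a$ is $\mid_H$-irreducible and $a = xy$ with both $x, y$ non-units, then $\mid_H$-irreducibility forces, say, $a \mid_H x$, so $x = paq$ and $a = p \cdot a \cdot (qy)$, and then acyclicity together with idempotent-freeness forces $y \in H^\times$, contradicting the assumption.

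For the forward direction, if $H$ is atomic then the set $\mathcal{A} \cup H^\times$, where $\mathcal{A}$ is the set of atoms, is a generating set of $H$. By (b) each atom is a $\mid_H$-quark and thus has $\mid_H$-height $1$, hence is strongly $\mid_H$-artinian; in particular, it satisfies the ACCP by Remark \ref{rem:artinianity}\ref{rem:artinianity(3)}. Every unit trivially satisfies the ACCP, because $HuH = H$ cannot be strictly contained in any principal two-sided ideal.

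For the converse, given a generating set $S$ each of whose elements satisfies the ACCP, take any non-unit $x \in H$ and write $x = s_1 \cdots s_k$ with $s_i \in S$. At least one $s_i$ is a non-unit (else $x$ would be a unit), and after absorbing each maximal block of unit factors into an adjacent non-unit factor we obtain $x = w_1 \cdots w_m$, where each $w_j$ has the form $u \cdot s_{i_j} \cdot v$ with $s_{i_j} \in S$ a non-unit and $u, v$ products of units. Since $H w_j H = H s_{i_j} H$, each $w_j$ inherits the ACCP from $s_{i_j}$ and is a $\mid_H$-non-unit by (a). Thus every $\mid_H$-non-unit of $H$ factors as a product of $\mid_H$-artinian $\mid_H$-non-units, i.e., $(H, \mid_H)$ is locally artinian. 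Theorem \ref{thm:FTF-with-local-artinianity} then gives that every non-unit of $H$ factors as a product of $\mid_H$-irreducibles, which by (b) are atoms. Hence $H$ is atomic.

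The main obstacle is step (b): identifying the $\mid_H$-irreducibles of an acyclic monoid with its atoms requires a careful joint use of acyclicity and the idempotent-freeness it entails, and it is precisely this step that makes the whole argument special to acyclic monoids (rather than, say, unit-cancellative ones). Once this dictionary is in place, the rest is routine bookkeeping around Theorem \ref{thm:FTF-with-local-artinianity} and Remark \ref{rem:artinianity}\ref{rem:artinianity(3)}.
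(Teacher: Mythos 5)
Your overall architecture coincides with the paper's: set up the dictionary between the monoid-theoretic notions (units, atoms, ACCP) and the $\mid_H$-theoretic ones ($\mid_H$-units, $\mid_H$-irreducibles, $\mid_H$-quarks, $\mid_H$-artinianity), translate atomicity into $\mid_H$-factorability and then into local $\mid_H$-artinianity via Theorem \ref{thm:FTF-with-local-artinianity} (the paper routes this through Corollary \ref{cor:characterization-of-factoriable}, which is the same content), and finally unwind local artinianity into the element-wise ACCP using Remark \ref{rem:artinianity}\ref{rem:artinianity(3)}. The paper simply outsources the dictionary to \cite[Corollary 4.4]{Tr20(c)}, whereas you prove it directly; your argument for (a), and your proofs that $\mid_H$-quarks are atoms and that $\mid_H$-irreducibles are atoms, are correct. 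Your explicit handling of the converse direction (absorbing unit factors into adjacent non-unit generators and noting $Hw_jH = Hs_{i_j}H$) fills in a detail the paper passes over silently, and it is done correctly.

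There is, however, one genuine gap. In the forward direction you write ``By (b) each atom is a $\mid_H$-quark and thus has $\mid_H$-height $1$,'' but (b) as you state and prove it only identifies the atoms with the $\mid_H$-irreducibles; combined with the general inclusion of quarks among irreducibles and your proof that $\mid_H$-quarks are atoms, what you actually have is that the quarks are \emph{contained in} the set of atoms $=$ irreducibles, and the inclusion you need --- every atom is a $\mid_H$-quark --- is nowhere established. This cannot be skipped: a $\preceq$-irreducible of a general premon may well have infinite $\preceq$-height, and the finiteness of the height of the atoms is precisely what makes them satisfy the ACCP (equivalently, it is the hypothesis under which the ``only if'' part of Corollary \ref{cor:characterization-of-factoriable} applies). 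The missing implication is true in acyclic monoids --- it is part of \cite[Corollary 4.4]{Tr20(c)}, which the paper cites --- and it yields to the same technique you already deploy: if $a$ is an atom and $b$ is a non-unit with $b \mid_H a$, write $a = pbq$; if $p$ and $q$ are both units, then $a \mid_H b$; otherwise $a = p\cdot(bq)$ with $p$ a non-unit (or $a = (pb)\cdot q$ with $q$ a non-unit), so the fact that $a$ is an atom forces $bq$ (resp., $pb$) to be a unit, and then your idempotent argument shows that $b$ itself is a unit, a contradiction. Hence no non-unit $b$ satisfies $b \prec a$, i.e., $a$ is a $\mid_H$-quark. Once this lemma is added, your proof is complete.
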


\begin{proof}
Let $H$ be an acyclic monoid. An element $x \in H$ is then a $\mid_H$-unit if and only if it is a unit. On the other hand, we gather from \cite[Corollary 4.4]{Tr20(c)} that $x$ is a $\mid_H$-irreducible if and only if it is an (ordinary) atom, if and only if it is a $\mid_H$-quark. It follows that $H$ is atomic if and only if it is $\mid_H$-factorable; and by Corollary \ref{cor:characterization-of-factoriable}, this is in turn equivalent to saying that $H$ is locally $\mid_H$-artinian. Hence every non-unit factors as a product of finitely many elements each of which satisfies the ACCP. Thus we are done, for it is obvious that units also satisfy the ACCP.
\end{proof}

First introduced in \cite[Definition 4.2]{Tr20(c)}, acyclic monoids abound in nature and provide an interesting alternative to cancellativity in the study of factorization in a non-commutative setting. Apart from unit-cancellative
commutative monoids, a number of non-commutative examples can be found in \cite[Example 5.4]{Co-Tr-22(a)}. In particular, we recall from the introduction that, for a monoid, being unit-cancellative and satisfying both the ACCPL and the ACCPR is equivalent to being acyclic and satisfying the ACCP.

\begin{examples}\label{exa:illustrations}
\begin{enumerate*}[label=\textup{(\arabic{*})}, mode=unboxed]
\item\label{exa:illustrations(1)}
Let $R$ be the subdomain of the univariate polynomial ring $\mathbb Q[X]$ over the rational field consisting of all polynomials whose constant term is an integer. We gather from the unnumbered example on p.~166 of \cite{Chap-2014} that $R$ is a non-atomic domain. This gives us a chance to illustrate how the sufficient condition in Corollary \ref{cor:characterize-atomicity-in-acyclic-monoids} can fail in practice.\\

\indent{}In fact, let $H$ be the multiplicative monoid of the non-zero elements of $R$ and suppose for a con\-tra\-dic\-tion that $H$ has a generating set $A$ each of whose elements satisfies the ACCP. Since $X$ is in $H$ and the only divisors of $X$ in $H$ are either integers or degree-one polynomials with zero constant term, it is clear that $qX \in A$ for some non-zero $q \in \mathbb Q$ (if the only generators in $A$ that divide $X$ were integers, then $X$ would not belong to the submonoid generated by $A$). However, $qX$ does not satisfy the ACCP (which is absurd), because $qX, \frac{1}{2} qX, \ldots, \frac{1}{2^i} qX, \ldots$ is a (strictly) $\mid_H$-decreasing sequence (note that $q_1 X \mid_H q_2 X$, for arbitrary $q_1, q_2 \in \mathbb Q$, if and only if $q_2 = q_1 k$ for some $k \in \mathbb Z$).
\end{enumerate*}

\vskip 0.05cm

\begin{enumerate*}[label=\textup{(\arabic{*})}, mode=unboxed, resume]
\item\label{exa:illustrations(2)} Let $r = a/b$ be a positive rational number smaller than $1$, with $a, b \in \mathbb N^+$, $a \ge 2$, and $\gcd(a,b) = 1$. We have already mentioned in Sect.~\ref{sec:intro} that the submonoid $H$ of $(\mathbb Q, +)$ generated by $1, r, r^2, \ldots$ is then an atomic monoid without the ACCP \cite[Corollary 4.4]{Cha-Got-Got21}. In fact, it is readily checked that, for all $i \in \mathbb N$,
\begin{equation*}
ar^{i+1} < ar^i = (b-a)r^{i+1} + ar^{i+1} \in ar^{i+1} + H, 
\end{equation*}
which shows that $a, ar, ar^2, \ldots$ is a (strictly) $\mid_H$-decreasing sequence. Since $H$ is a cancellative monoid, it follows that for an element $x \in H$ to be $\mid_H$-artinian (i.e., to satisfy the ACCP) it is necessary that $x \notin ar^i + H$ for each $i \in \mathbb N$. Interestingly, it turns out that the same condition is also sufficient.\\

\indent{}To see why, suppose $x \ne 0$ (or else there is nothing to prove) and denote by $\mathsf L_H(x)$ the set of all $n \in \allowbreak \mathbb N^+$ such that $x = a_1 + \cdots + a_n$ for some atoms $a_1, \ldots, a_n \in H$. We have that $\mathsf L_H(x) \ne \emptyset$ (note that the only unit of $H$ is the identity $0 \in \mathbb Q$), and \cite[Lemma 3.1(3)]{Cha-Got-Got19} yields that $|\mathsf L_H(x)| = \infty$ if and only if $x \in ar^i + H$ for some $i \in \mathbb N$. So, if $x \notin ar^i + H$ for every $i \in \mathbb N$, then $\mathsf L_H(x)$ is a non-empty finite subset of $\mathbb N^+$, which implies at once that the $\mid_H$-height of $x$ is finite and hence $x$ is $\mid_H$-artinian.
\end{enumerate*}
\end{examples}

\section*{Acknowledgments}
 
The paper was written during a visit at University of Graz, by invitation of Laura Cossu, in summer-fall 2022. The visit was funded through the Marie Sk\l{}odowska-Curie grant No.~101021791 from the European Union's Horizon 2020 research and innovation programme. I am indebted to Laura for her financial support, to Pedro A.~Garc\'ia-S\'anchez (University of Granada) for his help with Example \ref{exa:illustrations}\ref{exa:illustrations(2)}, to Daniel Smertnig (University of Graz) for his critical remarks, and to an anonymous referee for careful reading and many constructive comments.

\end{document}